\pdfoutput=1
\documentclass[11pt]{article}

\usepackage[a4paper,margin=1in]{geometry}
\usepackage[T1]{fontenc}
\usepackage{lmodern}
\usepackage{amsmath,amssymb,amsthm,mathtools}
\usepackage{enumitem}
\usepackage{booktabs}
\usepackage{array}
\usepackage{float}
\usepackage{cite}
\usepackage{url}
\usepackage{xurl}
\usepackage[hidelinks]{hyperref}
\hypersetup{
  pdftitle={Soft Metric Spaces via Soft Elements: Topology Across Parameter-Cardinalities and Fixed-Point Theory},
  pdfauthor={Subhasis Ray},
  pdfsubject={Soft metric spaces, product uniformity, and fixed-point theory},
  pdfkeywords={soft metric space, soft element, product uniformity, fixed point theorem}
}
\usepackage{microtype}

\theoremstyle{plain}
\newtheorem{theorem}{Theorem}[section]
\newtheorem{lemma}[theorem]{Lemma}
\newtheorem{proposition}[theorem]{Proposition}
\newtheorem{corollary}[theorem]{Corollary}

\theoremstyle{definition}
\newtheorem{definition}[theorem]{Definition}
\newtheorem{example}[theorem]{Example}

\theoremstyle{remark}
\newtheorem{remark}[theorem]{Remark}

\newcommand{\Pow}{\mathcal{P}}

\newcommand{\N}{\mathbb{N}}
\newcommand{\SE}{\mathrm{SE}}
\newcommand{\dsup}{d_{\infty}}
\newcommand{\dPi}{d_{\Pi}}
\newcommand{\bared}[1]{\bar d_{#1}}

\newcommand{\tprod}{\tau_{\mathrm{prod}}}
\newcommand{\Uprod}{\mathcal{U}_{\mathrm{prod}}}

\numberwithin{equation}{section}
\numberwithin{figure}{section}
\numberwithin{table}{section}
\allowdisplaybreaks
\title{Soft Metric Spaces via Soft Elements:\\
Topology Across Parameter-Cardinalities and Fixed-Point Theory}
\author{Subhasis Ray\\
\small Department of Mathematics, Visva-Bharati\\
\small Santiniketan 731235, West Bengal, India\\
\small \texttt{subhasis.ray@visva-bharati.ac.in}\\
\small ORCID: 0000-0003-1100-4632}
\date{July 2026}

\begin{document}
\maketitle

\begin{abstract}
A soft set $(F,E)$ determines the selection space $\SE(F)=\prod_{e\in E}F(e)$. This paper studies two natural structures on that space. For at most countable $E$, the series metric $\dPi$ induces the product topology. For arbitrary $E$, the sup metric $\dsup$ induces uniform convergence. We prove that these metric spaces are complete exactly when all fibres are complete. We then compare global contractions with coordinatewise contractions and give counterexamples when coordinate separability or a uniform contractive bound is absent. Standard fixed-point theorems for complete metric spaces are recorded as direct consequences, without repeating their classical proofs. For uncountable $E$, the product topology may fail to be metrizable, so we work with its product uniformity. A parameterwise contraction theorem gives a unique fixed point and convergence in the product topology; a common bound below one gives convergence in $\dsup$.
\end{abstract}

\noindent\textbf{Keywords:} soft metric space; soft element; product uniformity; fixed point theorem.\\
\textbf{MSC 2020:} 54E35; 54E15; 47H10; 54H25.

\section{Introduction}

Soft set theory was introduced by Molodtsov as a parameterized model for uncertain information \cite{Molodtsov1999}. Maji, Biswas and Roy developed basic operations on soft sets \cite{MajiBiswasRoy2003}. Ali et al. later corrected several of those operations and clarified their algebraic properties \cite{Ali2009}. Soft topological spaces were studied by Shabir and Naz \cite{ShabirNaz2011}, while \c{C}a\u{g}man, Karata\c{s} and Engino\u{g}lu developed another form of soft topology \cite{CagmanKaratasEnginoglu2011}.

Das and Samanta introduced soft real numbers and treated soft elements as parameter-indexed selections \cite{DasSamantaSoftReal2012}. They then defined soft metrics on soft points \cite{DasSamantaSoftMetric2013}. Chen and Lin proposed a soft Meir--Keeler fixed-point theorem \cite{ChenLin2015}. Abbas, Murtaza and Romaguera later showed that this result is not valid as stated and gave a corrected finite-parameter formulation. They also proved that a soft metric on soft points induces a compatible ordinary metric when the parameter set is finite \cite{AbbasMurtazaRomaguera2016}.

Soft fixed-point theory has developed in several directions. Mohammed and Azam considered soft-set-valued mappings and an application to a delay differential equation \cite{MohammedAzam2019}. Wadkar, Bhardwaj and Sharraf proved coupled fixed-point results in soft metric spaces \cite{WadkarBhardwajSharraf2020}. Bhardwaj et al. studied compatibility, continuity and common fixed points \cite{BhardwajEtAl2021}. Demir, \"Ozbak\i r and Y\i ld\i z worked with se-uniform spaces and a soft $E$-distance \cite{DemirEtAl2016}. Soylu and \c{C}er\c{c}i gave a classical metrization of soft metric topology \cite{SoyluCerci2024}. Aras G\"und\"uz, Bayramov and Erdem Co\c{s}kun considered contractions in soft parametric metric spaces \cite{ArasGunduzEtAl2024}. Chen and Huang introduced soft sup-comparable contractions \cite{ChenHuang2026}.

Other developments include convexity in soft sets \cite{SalihSabir2019} and sequences and series of soft complex numbers \cite{RahmanQadirSabir2025}. Alcantud et al. provide a recent survey of the main branches of soft set theory \cite{AlcantudEtAl2024}.

The present paper uses a different point set. Ray and Goldar
represented a soft set by its set of soft elements and used this
representation in soft group theory \cite{RayGoldar2017}. Goldar
and Ray later studied topology, separation and compactness on
soft-element spaces \cite{GoldarRay2019}. More recently, Ray used
soft relations to define a uniformity on the soft-element space and
studied separation, continuity, completeness and compactness in that
setting \cite{RaySoftUniform2026}. In the present paper,
\[
\SE(F)=\prod_{e\in E}F(e)
\]
is treated as an ordinary function space whose elements choose one value at each parameter.

This viewpoint is useful for iteration because it separates two notions of convergence. The product topology records coordinatewise convergence, whereas the sup metric records uniform convergence over all parameters. These structures agree for finite $E$, but they differ in general for infinite $E$. For uncountable $E$, the product topology is often not metrizable, although its natural product uniformity is still available.

The results specific to the soft-element setting are the completeness equivalence, the comparison between global and coordinatewise contractivity, the non-metrizability result for uncountable products, and the parameterwise contraction theorem. Banach, Meir--Keeler, Kannan, Chatterjea, cyclic and Nadler results are included as consequences of completeness. 

Section~2 introduces soft elements, explains their advantages over soft points, and develops the product and uniform metric structures, including topology, convergence and completeness. The section ends with an example separating product from uniform convergence. Section~3 records the transfer principle and compares global and coordinatewise contractions. Section~4 treats uncountable parameter sets through the product uniformity. Section~5 combines the discussion and conclusion.

\section{Soft elements and their metric structure}

\subsection{Soft elements and their advantages}

Throughout, $X$ is a nonempty universe and $E$ is a nonempty parameter set.

\begin{definition}
A \emph{soft set} over $X$ with parameter set $E$ is a mapping $F:E\to\Pow(X)$.
The set $F(e)$ is the approximate value of $F$ at the parameter $e$.
\end{definition}

\begin{definition}
A \emph{soft element} of $(F,E)$ is a function $x:E\to X$ such that $x(e)\in F(e)$ for every $e\in E$.
The set of all soft elements is
\[
\begin{aligned}
\SE(F)=\{x\in X^E:\;&x(e)\in F(e)\\
&\text{for every }e\in E\}.
\end{aligned}
\]
\end{definition}

\begin{remark}
We assume $\SE(F)\neq\varnothing$ whenever a metric or fixed-point theorem is stated.
If all fibres are nonempty, this assumption follows from the Axiom of Choice.
\end{remark}

By definition,
\[
\SE(F)=\prod_{e\in E}F(e)\subseteq X^E.
\]
Thus the soft elements form an ordinary selection space.
A soft element records one value at every parameter, whereas a soft point is active at only one parameter.
This full profile makes it possible to compare coordinatewise and uniform behaviour on the same space.
The difference is illustrated in Example~\ref{ex:nonuniform}. 

\subsection{Product and uniform metrics}

Assume that each fibre $F(e)$ carries a metric $d_e$.

\begin{definition}
For $e\in E$, define
\[
\bared{e}(u,v)=\min\{1,d_e(u,v)\},\qquad u,v\in F(e).
\]
\end{definition}

\begin{lemma}\label{lem:bounded-complete}
The metrics $d_e$ and $\bared{e}$ have the same convergent and Cauchy sequences.
In particular, $(F(e),d_e)$ is complete if and only if $(F(e),\bared{e})$ is complete.
\end{lemma}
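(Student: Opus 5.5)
The plan is to prove one elementary observation and derive both assertions of Lemma~\ref{lem:bounded-complete} from it. The observation is that for every $\varepsilon\in(0,1)$ and all $u,v\in F(e)$,
\[
\bared{e}(u,v)<\varepsilon \iff d_e(u,v)<\varepsilon .
\]
Both directions follow directly from the definition $\bared{e}=\min\{1,d_e\}$. If $d_e(u,v)<\varepsilon<1$, then the minimum is $d_e(u,v)$. Conversely, if $\bared{e}(u,v)<\varepsilon<1$, then the minimum is strictly below $1$. Hence the minimum must be attained by $d_e(u,v)$, and so $d_e(u,v)=\bared{e}(u,v)<\varepsilon$.

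Next comes convergence. A sequence $(u_n)$ converges to $u$ in a metric exactly when, for every $\varepsilon>0$, the distance to $u$ is eventually below $\varepsilon$. It suffices to check this for $\varepsilon\in(0,1)$, since smaller $\varepsilon$ give stronger conditions. By the observation, the condition reads identically for $d_e$ and for $\bared{e}$. So the two metrics have the same convergent sequences, with the same limits.

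For Cauchy sequences the same argument works, now with pairs $(u_m,u_n)$ for $m,n\ge N$ in place of $(u_n,u)$. Again one may restrict to $\varepsilon\in(0,1)$ and apply the observation.

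Completeness follows at once. It means that every Cauchy sequence converges. Both the class of Cauchy sequences and the class of convergent sequences coincide for $d_e$ and $\bared{e}$, so $(F(e),d_e)$ is complete exactly when $(F(e),\bared{e})$ is.

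I do not expect a real obstacle here. The only point needing care is justifying the reduction to $\varepsilon<1$, namely that the defining conditions for convergence and for the Cauchy property are monotone in $\varepsilon$. I will state this explicitly, since the equivalence in the observation fails for $\varepsilon>1$.
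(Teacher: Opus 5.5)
Your proposal is correct and follows the paper's own argument: the paper's proof is just the one-line observation that $d_e$ and $\bared{e}$ agree on distances below $1$. Your version makes the same idea explicit, including the reduction to $\varepsilon\in(0,1)$ and the fact that limits are preserved.
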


\begin{proof}
For distances below $1$, the two metrics agree. Therefore they have the same Cauchy and convergent sequences.
\end{proof}

\begin{definition}
For arbitrary $E$ and $x,y\in\SE(F)$, define
\[
\dsup(x,y)=\sup_{e\in E}\bared{e}(x(e),y(e)).
\]
\end{definition}

\begin{definition}
Assume that $E$ is at most countable. If $E=\{e_1,\ldots,e_N\}$ is finite, define
\[
\dPi(x,y)=\sum_{n=1}^{N}2^{-n}\bared{e_n}(x(e_n),y(e_n)).
\]
If $E=\{e_1,e_2,\ldots\}$ is countably infinite, use the same formula with the sum taken over $n\ge1$.
\end{definition}

\begin{proposition}\label{prop:metric-properties}
The function $\dsup$ is a metric on $\SE(F)$ for arbitrary $E$.
If $E$ is at most countable, then $\dPi$ is also a metric on $\SE(F)$.
\end{proposition}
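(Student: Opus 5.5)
The plan is to first show that each truncated metric $\bared{e}$ is itself a metric on $F(e)$ bounded by $1$, and then to pass the metric axioms through the supremum and through the weighted series.

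For the first step, nonnegativity, symmetry and the identity of indiscernibles for $\bared{e}$ come directly from those of $d_e$, since $\min\{1,t\}=0$ exactly when $t=0$. For the triangle inequality $\bared{e}(u,w)\le \bared{e}(u,v)+\bared{e}(v,w)$ I will split into two cases.
\begin{itemize}
\item If either term on the right equals $1$, the inequality holds because the left side is at most $1$.
\item Otherwise both terms agree with $d_e$, so
\[
\bared{e}(u,w)\le d_e(u,w)\le d_e(u,v)+d_e(v,w)=\bared{e}(u,v)+\bared{e}(v,w).
\]
\end{itemize}
This is the only place where an actual computation is needed, and it is the step I would treat as the main (though mild) obstacle.

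For $\dsup$, boundedness by $1$ guarantees that the supremum is a finite real number in $[0,1]$, even for arbitrary $E$. Symmetry and nonnegativity are inherited from the coordinates. If $\dsup(x,y)=0$, then $\bared{e}(x(e),y(e))=0$ for every $e$, so $x(e)=y(e)$ for every $e$, and hence $x=y$ as functions on $E$. For the triangle inequality, I fix $e$ and write
\[
\bared{e}(x(e),z(e))\le \bared{e}(x(e),y(e))+\bared{e}(y(e),z(e))\le \dsup(x,y)+\dsup(y,z),
\]
and then take the supremum over $e$ on the left.

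For $\dPi$ when $E$ is at most countable, the series converges because its terms are dominated by $2^{-n}$, so $\dPi\le 1$. Symmetry and nonnegativity hold termwise. If $\dPi(x,y)=0$, then each nonnegative term vanishes, and since $2^{-n}>0$ this forces $x(e_n)=y(e_n)$ for all $n$. The enumeration exhausts $E$, so $x=y$. For the triangle inequality, I multiply the coordinatewise inequality by $2^{-n}$ and sum over $n$. This is legitimate because all three series converge, so linearity and monotonicity of convergent series apply. The finite case is the same argument with finite sums.
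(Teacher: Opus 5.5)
Your proof is correct and follows the same route as the paper: separation and symmetry are inherited from the fibres, the triangle inequality for $\dsup$ comes from taking suprema of the coordinate inequalities, and for $\dPi$ you weight each coordinate inequality by $2^{-n}$ and sum. You also supply details the paper leaves implicit, namely the case-split proof that $\bared{e}$ satisfies the triangle inequality and the observation that boundedness by $1$ makes the supremum finite and the series convergent.
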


\begin{proof}
Separation and symmetry follow from the corresponding properties of the fibre metrics.
The triangle inequality for $\dsup$ follows by taking the supremum of the coordinate triangle inequalities.
For $\dPi$, multiply each coordinate triangle inequality by $2^{-n}$ and sum.
\end{proof}

\subsection{Topology, convergence and completeness}

\begin{theorem}\label{thm:topology-comparison}
Let $\tprod$ be the subspace product topology on $\SE(F)\subseteq\prod_{e\in E}F(e)$.
Then:
\begin{enumerate}[label=\textup{(\alph*)},leftmargin=2em]
\item if $E$ is at most countable, $\dPi$ induces $\tprod$;
\item for arbitrary $E$, the topology induced by $\dsup$ is finer than $\tprod$;
\item if $E$ is finite, $\dPi$ and $\dsup$ are Lipschitz equivalent.
\end{enumerate}
\end{theorem}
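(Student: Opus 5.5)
The plan is to compare basic open sets in the two directions, using that $\tprod$ on $\SE(F)$ has as a base the sets $\{y\in\SE(F): \bared{e}(y(e),x(e))<\varepsilon \text{ for } e\in A\}$, with $A\subseteq E$ finite and $\varepsilon\in(0,1]$. This is legitimate because each $d_e$ and $\bared{e}$ induce the same topology on $F(e)$: they agree below distance $1$, as in Lemma~\ref{lem:bounded-complete}.

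For (a), I would argue both inclusions directly.

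\emph{$\dPi$-open sets are $\tprod$-open.} Fix $x$ and $r>0$. Choose $N$ with $\sum_{n>N}2^{-n}<r/2$; for finite $E$, take $N=|E|$ and the tail is empty. Then, whenever $\bared{e_n}(y(e_n),x(e_n))<r/2$ for all $n\le N$, we get
\[
\dPi(x,y)<\tfrac r2\sum_{n\le N}2^{-n}+\tfrac r2<r .
\]
Hence the basic product neighbourhood determined by $A=\{e_1,\dots,e_N\}$ and $\varepsilon=r/2$ lies inside the $\dPi$-ball of radius $r$.

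\emph{$\tprod$-open sets are $\dPi$-open.} Given a basic neighbourhood with finite $A$ and $\varepsilon\le1$, let $m=\max\{n:e_n\in A\}$. If $\dPi(x,y)<2^{-m}\varepsilon$, then for each $e_n\in A$ we have $2^{-n}\bared{e_n}\le\dPi<2^{-m}\varepsilon$. Since $n\le m$, this gives $\bared{e_n}<2^{n-m}\varepsilon\le\varepsilon$, so the $\dPi$-ball sits inside the product neighbourhood.

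For (b), I would use the same basic neighbourhoods. If $\dsup(x,y)<\varepsilon$, then $\bared{e}(x(e),y(e))<\varepsilon$ for every $e$, in particular for every $e\in A$. So each basic $\tprod$-neighbourhood contains a $\dsup$-ball, and hence every $\tprod$-open set is $\dsup$-open.

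For (c), with $E=\{e_1,\dots,e_N\}$, I would prove the two Lipschitz bounds directly.
\begin{itemize}
\item Each term satisfies $2^{-n}\bared{e_n}\le 2^{-n}\dsup$, so summing gives $\dPi\le(1-2^{-N})\dsup\le\dsup$.
\item For each $n$, $\bared{e_n}\le 2^{n}\dPi\le 2^N\dPi$, so taking the maximum gives $\dsup\le 2^N\dPi$.
\end{itemize}

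None of the steps is genuinely hard. The only point needing care is the tail estimate in (a) for countably infinite $E$, where the choice of $N$ depends on $r$. I also need to make sure the convention that the basic sets use $\bared{e}$ rather than $d_e$ is justified, which it is by local agreement of the two metrics.
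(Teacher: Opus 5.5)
Your proposal is correct and follows the same route as the paper. For (a) you place a $\dPi$-ball of radius $2^{-m}\varepsilon$ inside a basic product neighbourhood and use a tail estimate for the converse; for (b) you place a single $\dsup$-ball inside a basic neighbourhood; for (c) you use the bounds $\dPi\le\dsup\le 2^N\dPi$. Your explicit constants simply fill in what the paper states verbally. One cosmetic point: in the tail-estimate direction of (a), $\varepsilon=r/2$ can exceed your convention $\varepsilon\le 1$. This is harmless: use $\min\{r/2,1\}$, or note that such a set is still $\tprod$-open.
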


\begin{proof}
For (a), a basic product neighborhood controls finitely many coordinates.
A sufficiently small $\dPi$-ball controls those coordinates because every corresponding term of the series is bounded by the whole sum.
Conversely, given a $\dPi$-ball, choose a finite initial part of the series so that its remaining tail is small, and control the finitely many initial coordinates.
This gives a product neighborhood inside the ball.

For (b), a $\dsup$-ball of radius smaller than the finitely many radii in a basic product neighborhood is contained in that neighborhood.

For (c), if $E=\{e_1,\ldots,e_N\}$, then
\[
\dPi(x,y)\le\dsup(x,y)\le 2^N\dPi(x,y).
\]
\end{proof}

\begin{proposition}\label{prop:convergence}
If $E$ is at most countable, convergence in $\dPi$ is exactly coordinatewise convergence.
For arbitrary $E$, convergence in $\dsup$ is uniform convergence with respect to the bounded fibre metrics.
\end{proposition}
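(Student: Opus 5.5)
The proof splits into the two assertions of Proposition~\ref{prop:convergence}, and each is settled by comparing the metric with its coordinate terms.

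For the countable case, take a sequence $(x_k)$ in $\SE(F)$ and a point $x\in\SE(F)$, and enumerate $E=\{e_1,e_2,\ldots\}$ (finite or infinite).

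Suppose first that $\dPi(x_k,x)\to0$. Fix $n$. Every term of the series is bounded by the whole sum, so
\[
2^{-n}\bared{e_n}(x_k(e_n),x(e_n))\le\dPi(x_k,x).
\]
Hence $\bared{e_n}(x_k(e_n),x(e_n))\to0$. By Lemma~\ref{lem:bounded-complete}, this gives $x_k(e_n)\to x(e_n)$ in $d_{e_n}$.

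Conversely, suppose $x_k(e)\to x(e)$ for every $e$, and fix $\varepsilon>0$. Since each $\bared{e_n}\le1$, I choose $N$ with $\sum_{n>N}2^{-n}<\varepsilon/2$; in the finite case there is no tail. Coordinatewise convergence at the finitely many $e_1,\ldots,e_N$ gives some $K$ such that, for $k\ge K$,
\[
\sum_{n\le N}2^{-n}\bared{e_n}(x_k(e_n),x(e_n))<\varepsilon/2.
\]
Adding the two pieces gives $\dPi(x_k,x)<\varepsilon$ for $k\ge K$.

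This is exactly the tail argument from Theorem~\ref{thm:topology-comparison}(a). One could instead just cite that theorem: $\dPi$ induces $\tprod$, and convergence in a product topology is coordinatewise convergence. I will give the direct $\varepsilon$-argument so that the sequential statement is self-contained.

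For the sup metric, the claim is essentially the definition unwound. The statement $\dsup(x_k,x)\to0$ means: for every $\varepsilon>0$ there is $K$ such that
\[
\sup_{e\in E}\bared{e}(x_k(e),x(e))\le\varepsilon\quad\text{for all }k\ge K.
\]
This says precisely that $x_k\to x$ uniformly on $E$ with respect to the fibre metrics $\bared{e}$. For $\varepsilon<1$, it is also uniform convergence with respect to the $d_e$, because the two metrics agree below $1$. I will remark on this in passing.

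There is no real obstacle here. The only point needing care is the uniform tail bound in the countable case, and it works because the truncation to $\bared{e}$ makes every term at most $1$.
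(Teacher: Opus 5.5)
Your proposal is correct and follows the paper's approach. The paper cites Theorem~\ref{thm:topology-comparison}(a) for the countable case and the definition of $\dsup$ for the uniform case, and your term-bound and tail-estimate argument is just that theorem's proof specialised to sequences; your added remark that uniform convergence in the $\bared{e}$ coincides with uniform convergence in the $d_e$ (since $\sup_e\bared{e}\le\varepsilon<1$ forces $d_e\le\varepsilon$) is also correct.
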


\begin{proof}
The first statement follows from Theorem~\ref{thm:topology-comparison}(a). The second follows from the definition of $\dsup$.
\end{proof}

\begin{theorem}\label{thm:completeness-equivalence}
Assume $\SE(F)\neq\varnothing$.
\begin{enumerate}[label=\textup{(\alph*)},leftmargin=2em]
\item If $E$ is at most countable, $(\SE(F),\dPi)$ is complete if and only if every $(F(e),d_e)$ is complete.
\item For arbitrary $E$, $(\SE(F),\dsup)$ is complete if and only if every $(F(e),d_e)$ is complete.
\end{enumerate}
\end{theorem}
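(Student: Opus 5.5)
Both parts follow the same two-step pattern: fibre completeness gives completeness of the product by coordinatewise limits, and the converse embeds each fibre isometrically (or uniformly) into $\SE(F)$ by freezing all other coordinates. By Lemma~\ref{lem:bounded-complete} we may replace each $d_e$ by $\bared{e}$ throughout, since completeness is unaffected.

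\emph{Sufficiency.} Suppose every $(F(e),\bared{e})$ is complete and let $(x_k)$ be Cauchy in $\SE(F)$.

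For (b), we have $\bared{e}(x_k(e),x_m(e))\le\dsup(x_k,x_m)$. Hence each coordinate sequence $(x_k(e))$ is Cauchy, and so converges to some $x(e)\in F(e)$. Thus $x\in\SE(F)$, with no choice needed since the limits are determined. Given $\varepsilon>0$, pick $K$ with $\dsup(x_k,x_m)<\varepsilon$ for $k,m\ge K$. Letting $m\to\infty$ in each coordinate gives $\bared{e}(x_k(e),x(e))\le\varepsilon$ for all $e$ simultaneously. Hence $\dsup(x_k,x)\le\varepsilon$, which is the standard uniform-limit argument.

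For (a), we have $\bared{e_n}(x_k(e_n),x_m(e_n))\le 2^n\dPi(x_k,x_m)$. So each coordinate is Cauchy and converges to some $x(e_n)$. Proposition~\ref{prop:convergence} (convergence in $\dPi$ is coordinatewise convergence) then gives $x_k\to x$ in $\dPi$.

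\emph{Necessity.} Fix $e_0\in E$ and some $a\in\SE(F)$, which exists by hypothesis. Define $j:F(e_0)\to\SE(F)$ by $j(u)(e_0)=u$ and $j(u)(e)=a(e)$ for $e\neq e_0$. Then $\dsup(j(u),j(v))=\bared{e_0}(u,v)$, so $j$ is an isometry. In case (a), with $e_0=e_n$, we get $\dPi(j(u),j(v))=2^{-n}\bared{e_n}(u,v)$, so $j$ is a scaled isometry.

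Now let $(u_k)$ be Cauchy in $F(e_0)$. Then $(j(u_k))$ is Cauchy in $\SE(F)$ and converges to some $x$. For $\dsup$ this gives $u_k\to x(e_0)$ directly. For $\dPi$ it gives the same conclusion via the coordinate bound above. Hence $F(e_0)$ is complete, and Lemma~\ref{lem:bounded-complete} transfers completeness back to $d_{e_0}$.

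The only delicate point is the nonemptiness of $\SE(F)$, needed to build $j$; this is exactly why the hypothesis $\SE(F)\neq\varnothing$ is assumed. The uniform estimate in sufficiency for (b) is the main technical step. Passing to the limit in $m$ for fixed $e$ and then taking the supremum handles it, because the bound $\varepsilon$ does not depend on $e$.
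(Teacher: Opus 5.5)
Your proof is correct and follows the paper's argument. Sufficiency comes from coordinatewise limits: for $\dsup$ the $\varepsilon$-bound passes to the limit in $m$ uniformly in $e$, and for $\dPi$ you appeal to Proposition~\ref{prop:convergence}. Necessity comes from freezing every coordinate except $e_0$ at a base selection. The only difference is cosmetic: you state the converse directly through the (scaled) isometric embedding $j$, whereas the paper argues by contradiction from a Cauchy sequence with no limit.
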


\begin{proof}
Assume first that every fibre is complete.
Let $(x_k)$ be Cauchy in $\dPi$.
For each coordinate $e_n$,
\[
2^{-n}\bared{e_n}(x_k(e_n),x_m(e_n))\le\dPi(x_k,x_m),
\]
so $(x_k(e_n))$ is Cauchy in the fibre and has a limit $x(e_n)\in F(e_n)$.
The resulting selection $x$ is the $\dPi$-limit by Proposition~\ref{prop:convergence}.

For $\dsup$, a $\dsup$-Cauchy sequence is Cauchy in every coordinate.
Let $x(e)$ be the coordinate limit.
Given $\varepsilon>0$, choose $N$ such that $\dsup(x_n,x_m)<\varepsilon$ for $m,n\ge N$.
Letting $m\to\infty$ coordinatewise gives $\bared{e}(x_n(e),x(e))\le\varepsilon$ for every $e$, hence $\dsup(x_n,x)\le\varepsilon$.

Conversely, suppose one fibre $F(e_0)$ is not complete.
Choose a Cauchy sequence $(u_k)$ in $F(e_0)$ with no limit in that fibre.
Fix a base selection $a\in\SE(F)$ and define $x_k(e_0)=u_k$ and $x_k(e)=a(e)$ for $e\ne e_0$.
Then $(x_k)$ is Cauchy in $\dsup$ and, when $E$ is at most countable, also in $\dPi$.
Any limit in $\SE(F)$ would give a limit of $(u_k)$ in $F(e_0)$, a contradiction.
\end{proof}

\begin{remark}\label{rem:incomplete-fibre}
One incomplete fibre is enough to make the selection space incomplete: all other coordinates can remain fixed while the incomplete coordinate carries a Cauchy sequence with no limit.
\end{remark}

\begin{example}\label{ex:nonuniform}
Let $E=\N$, let $F(n)=[0,1]$ with the usual metric, and define
\[
(Tx)(n)=q_nx(n),\qquad q_n=1-\frac{1}{n+1}.
\]
Each coordinate map is a contraction and has fixed point $0$.
Starting from $x_0(n)=1$, the iterates satisfy $x_k(n)=q_n^k\to0$ for every fixed $n$.
Hence $x_k$ converges to $0$ in the product topology.
However,
\[
\dsup(x_k,0)=\sup_n q_n^k=1
\]
for every $k$, so the convergence is not uniform.
The full selection records both behaviours on the same space.
\end{example}

\section{Fixed-point transfer and contraction structure}

\subsection{Classical fixed-point transfer and consequences}

\begin{proposition}\label{prop:transfer}
Let $d=\dPi$ when $E$ is at most countable, and let $d=\dsup$ for arbitrary $E$. If every fibre is complete, then $(\SE(F),d)$ is complete. Consequently, any classical fixed-point result stated for complete metric spaces may be applied to $(\SE(F),d)$ after its remaining assumptions have been verified.
\end{proposition}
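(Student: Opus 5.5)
The completeness assertion is exactly the forward direction of Theorem~\ref{thm:completeness-equivalence}, so the plan is to reduce the statement to that theorem and then spell out what ``applying a classical result'' requires. We first fix the ambient hypothesis $\SE(F)\neq\varnothing$, as in the standing Remark. We then split according to the choice of $d$. If $E$ is at most countable and $d=\dPi$, Theorem~\ref{thm:completeness-equivalence}(a) gives completeness of $(\SE(F),\dPi)$ from completeness of every $(F(e),d_e)$. For arbitrary $E$ and $d=\dsup$, Theorem~\ref{thm:completeness-equivalence}(b) gives the same conclusion for $(\SE(F),\dsup)$. Proposition~\ref{prop:metric-properties} guarantees that $d$ is a genuine metric in each case, so $(\SE(F),d)$ is a complete metric space.

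For the ``consequently'' clause, the argument is purely formal. A classical fixed-point theorem has the shape ``if $(M,\rho)$ is a complete metric space and $T\colon M\to M$ (or $T\colon M\to CB(M)$ in the Nadler case) satisfies condition $\mathcal C$ with respect to $\rho$, then $T$ has a fixed point (with the stated uniqueness and convergence).'' Taking $M=\SE(F)$ and $\rho=d$, the completeness hypothesis is discharged by the first part. The conclusion then follows once $\mathcal C$ is checked for $d$ itself. I would emphasize that $\mathcal C$ must be verified relative to $d$, that is, relative to $\dPi$ or $\dsup$ built from the truncated metrics $\bared{e}$. It is not enough to verify it relative to the fibre metrics $d_e$.

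No genuine obstacle is expected. The only point needing care is this last remark: conditions are to be checked for the truncated product metric. Lemma~\ref{lem:bounded-complete} ensures that truncation does not affect completeness or convergence. It can, however, affect contractive inequalities, and this is exactly what the later comparison of global and coordinatewise contractions addresses. I would note this caveat in the proof rather than attempt any further verification.
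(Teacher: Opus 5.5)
Your proposal is correct and follows the paper's proof: completeness is the forward direction of Theorem~\ref{thm:completeness-equivalence}, and the remaining clause is the cited metric-space theorem applied to $(\SE(F),d)$. Your caveat is also right. A contraction in the fibre metrics $d_e$ need not be one in the truncated $\bared{e}$; for example, $u\mapsto u/2$ on $\mathbb{R}$ fails at $|u-v|=2$. So the contractive condition must be checked for $d$ itself.
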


\begin{proof}
The completeness statement is Theorem~\ref{thm:completeness-equivalence}. The rest follows by applying the relevant metric-space theorem.
\end{proof}

We recall the contractive conditions used below.

\begin{definition}\label{def:contractive-types}
Let $(Y,d)$ be a metric space and $T:Y\to Y$.
The map $T$ is:
\begin{enumerate}[label=\textup{(\alph*)},leftmargin=2em]
\item a Banach contraction if $d(Tx,Ty)\le qd(x,y)$ for some $q<1$;
\item a Kannan contraction if
\[
d(Tx,Ty)\le\alpha\bigl(d(x,Tx)+d(y,Ty)\bigr),\qquad \alpha<\tfrac12;
\]
\item a Chatterjea contraction if
\[
d(Tx,Ty)\le\beta\bigl(d(x,Ty)+d(y,Tx)\bigr),\qquad \beta<\tfrac12;
\]
\item a Meir--Keeler contraction if, for every $\varepsilon>0$, some $\delta>0$ satisfies
\[
\varepsilon\le d(x,y)<\varepsilon+\delta\Longrightarrow d(Tx,Ty)<\varepsilon.
\]
\end{enumerate}
\end{definition}

\begin{corollary}\label{cor:standard-fixed-points}
Under the assumptions of Proposition~\ref{prop:transfer}, the following statements hold on $\SE(F)$ in the metric $d$.
\begin{enumerate}[label=\textup{(\alph*)},leftmargin=2em]
\item A Banach contraction has a unique fixed point and every Picard iteration converges to it \cite{Banach1922}.
\item A Meir--Keeler contraction has a unique fixed point \cite{MeirKeeler1969}.
\item A Kannan contraction has a unique fixed point \cite{Kannan1968}.
\item A Chatterjea contraction has a unique fixed point \cite{Chatterjea1972}.
\item Let $A,B$ be nonempty closed subsets of $\SE(F)$ and let $T:A\cup B\to A\cup B$. If $T(A)\subseteq B$, $T(B)\subseteq A$, and
\[
d(Tx,Ty)\le qd(x,y),\qquad x\in A,\ y\in B,
\]
for some $q<1$, then $T$ has a unique fixed point in $A\cap B$ \cite{KirkSrinivasanVeeramani2003}.
\item Let $\mathcal{CB}(\SE(F))$ be the family of nonempty closed bounded subsets. If $T:\SE(F)\to\mathcal{CB}(\SE(F))$ satisfies
\[
H_d(Tx,Ty)\le qd(x,y),\qquad q<1,
\]
where $H_d$ is the Hausdorff metric, then some $x^*$ satisfies $x^*\in T(x^*)$ \cite{Nadler1969}.
\end{enumerate}
\end{corollary}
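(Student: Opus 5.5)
The plan is to reduce each item of Corollary~\ref{cor:standard-fixed-points} to Proposition~\ref{prop:transfer}. Under its hypotheses the space $(\SE(F),d)$ is complete, for either choice $d=\dPi$ or $d=\dsup$. This uses Theorem~\ref{thm:completeness-equivalence} together with the standing assumption $\SE(F)\neq\varnothing$. Once completeness is in hand, each item is a classical theorem about complete metric spaces applied to $Y=\SE(F)$. So for each item I only need to check that its remaining hypotheses are met in the form the cited theorem requires.

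I will go through the items in order.
\begin{enumerate}[label=\textup{(\alph*)},leftmargin=2em]
\item Banach's theorem \cite{Banach1922} applies verbatim on a nonempty complete space. It gives uniqueness of the fixed point and convergence of every Picard sequence $T^kx_0$ in $d$.
\item The Meir--Keeler theorem \cite{MeirKeeler1969} needs only completeness and the $\varepsilon$--$\delta$ condition of Definition~\ref{def:contractive-types}(d).
\item Kannan's theorem \cite{Kannan1968} needs only completeness and $\alpha<\tfrac12$.
\item Chatterjea's theorem \cite{Chatterjea1972} needs only completeness and $\beta<\tfrac12$.
\item For the cyclic statement \cite{KirkSrinivasanVeeramani2003}, I note that $A$ and $B$ are closed subsets of a complete space. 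Hence $A\cup B$ is complete in the restricted metric, and the Kirk--Srinivasan--Veeramani theorem applies on it. For orientation, the mechanism is that the Picard sequence alternates between $A$ and $B$ and is Cauchy, because $d(T^{k+1}x,T^kx)\le q^k d(Tx,x)$. Its limit lies in $A\cap B$ because both sets are closed.
\item For Nadler's theorem \cite{Nadler1969}, I need $H_d$ to be a genuine metric on $\mathcal{CB}(\SE(F))$ and the space to be complete. Both hold here: $d\le 1$ because the fibre metrics are truncated to $\bared{e}$, so every nonempty closed set is bounded.
\end{enumerate}

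I expect no step to be a real obstacle, since no new argument is needed. The one point to be careful about is making sure each classical theorem uses only the metric $d$. Coordinatewise contractivity does \emph{not} by itself give global contractivity in $d$; that is the subject of Section~3. So the hypotheses in (a)--(f) must be read as conditions on $d$ itself. I will also state explicitly that in case (a) with $d=\dPi$, Picard convergence is convergence in the product topology, by Theorem~\ref{thm:topology-comparison}(a). With $d=\dsup$ it is uniform convergence, by Proposition~\ref{prop:convergence}.
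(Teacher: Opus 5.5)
Your proposal is correct and follows the paper's route: completeness of $(\SE(F),d)$ comes from Proposition~\ref{prop:transfer} via Theorem~\ref{thm:completeness-equivalence}, and each item is then the cited classical theorem applied directly, which is all the paper's one-line proof does. Your extra checks are accurate but not needed for the argument: the closedness of $A\cup B$, the bound $d\le 1$ in Nadler's setting, and the reading of Picard convergence as product versus uniform convergence. One small point is that $H_d$ is a metric on $\mathcal{CB}$ in any metric space; the bound $d\le 1$ only ensures that $\mathcal{CB}$ contains every nonempty closed set.
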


\begin{proof}
Apply Proposition~\ref{prop:transfer} to the cited metric-space results.
\end{proof}

A few further consequences of the Banach contraction principle are useful here.

\begin{proposition}\label{prop:commuting}
Assume that $(\SE(F),d)$ is complete. Let $T:\SE(F)\to\SE(F)$ be a contraction and let $S:\SE(F)\to\SE(F)$ satisfy $ST=TS$.
Then $T$ and $S$ have a common fixed point. This point is the unique fixed point of $T$.
\end{proposition}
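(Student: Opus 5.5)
By Corollary~\ref{cor:standard-fixed-points}(a), since $(\SE(F),d)$ is complete and $T$ is a Banach contraction with some constant $q<1$, the map $T$ has a unique fixed point $x^*\in\SE(F)$. The plan is to show that $S$ maps the fixed-point set of $T$ into itself. Uniqueness then forces $Sx^*=x^*$.

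First I apply $T$ to $Sx^*$ and use commutativity:
\[
T(Sx^*)=S(Tx^*)=Sx^*.
\]
So $Sx^*$ is a fixed point of $T$. Since $T$ has exactly one fixed point, $Sx^*=x^*$, and $x^*$ is a common fixed point of $S$ and $T$.

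For the second assertion, let $y$ be any common fixed point of $T$ and $S$. In particular $Ty=y$, so $y=x^*$ by uniqueness. Hence the common fixed point found above is the unique fixed point of $T$, and indeed it is the only common fixed point of the pair. No continuity or contractivity of $S$ is needed, because $S$ is only used through the identity $ST=TS$ at the single point $x^*$.

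There is no real obstacle here. The only point to check is that "contraction" is read as a Banach contraction in the metric $d$ of Proposition~\ref{prop:transfer}, so that Corollary~\ref{cor:standard-fixed-points}(a) gives both existence and uniqueness. Completeness is assumed directly in the statement, so Theorem~\ref{thm:completeness-equivalence} need not be invoked.
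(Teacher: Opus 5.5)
Your proof is correct and follows the paper's argument: take the unique fixed point $x^*$ of $T$ from Banach's principle, note $T(Sx^*)=S(Tx^*)=Sx^*$, and conclude $Sx^*=x^*$ by uniqueness. One small point: Corollary~\ref{cor:standard-fixed-points}(a) is literally phrased under fibre completeness, but Banach's theorem applies directly to the complete space $(\SE(F),d)$ assumed here, so nothing changes.
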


\begin{proof}
If $Tx^*=x^*$, then $T(Sx^*)=S(Tx^*)=Sx^*$.
Thus $Sx^*$ is another fixed point of $T$ and must equal $x^*$.
\end{proof}

\begin{proposition}\label{prop:coupled}
Assume that $(\SE(F),d)$ is complete. Let $\mathcal{C}:\SE(F)\times\SE(F)\to\SE(F)$ satisfy
\[
d(\mathcal{C}(x,y),\mathcal{C}(u,v))
\le \frac{q}{2}\bigl(d(x,u)+d(y,v)\bigr)
\]
for all $x,y,u,v\in\SE(F)$, where $q<1$.
Then $\mathcal{C}$ has a unique coupled fixed point.
\end{proposition}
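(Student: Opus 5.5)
The plan is to reduce the coupled problem to the Banach contraction principle, Corollary~\ref{cor:standard-fixed-points}(a), on a product of two copies of the selection space. Recall that a coupled fixed point is a pair $(x,y)$ with $\mathcal{C}(x,y)=x$ and $\mathcal{C}(y,x)=y$.

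On $Z=\SE(F)\times\SE(F)$ I will use the metric
\[
\rho\bigl((x,y),(u,v)\bigr)=d(x,u)+d(y,v).
\]
Since $(\SE(F),d)$ is assumed complete, $(Z,\rho)$ is complete. Indeed, a $\rho$-Cauchy sequence is Cauchy in each component, and the sum of the component distances to the limits tends to zero.

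Next I define $\Phi:Z\to Z$ by $\Phi(x,y)=(\mathcal{C}(x,y),\mathcal{C}(y,x))$. The key estimate comes from applying the hypothesis twice, once to the pairs $(x,y),(u,v)$ and once to the swapped pairs $(y,x),(v,u)$:
\[
\rho\bigl(\Phi(x,y),\Phi(u,v)\bigr)\le \frac q2\bigl(d(x,u)+d(y,v)\bigr)+\frac q2\bigl(d(y,v)+d(x,u)\bigr)=q\,\rho\bigl((x,y),(u,v)\bigr).
\]
So $\Phi$ is a Banach contraction on $(Z,\rho)$. It therefore has a unique fixed point $(x^*,y^*)$, and Picard iterates converge to it. By construction, fixed points of $\Phi$ are exactly the coupled fixed points of $\mathcal{C}$, which gives existence and uniqueness at once.

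The main obstacle is the meaning of ``unique'' in the statement. If it means uniqueness of the coupled pair, the argument above is finished. If it is meant in the stronger sense that the coupled fixed point is diagonal, I will add a symmetry step. If $(x^*,y^*)$ is a fixed point of $\Phi$, then so is $(y^*,x^*)$, because $\Phi(y^*,x^*)=(\mathcal{C}(y^*,x^*),\mathcal{C}(x^*,y^*))=(y^*,x^*)$. Uniqueness then forces $x^*=y^*$. A more direct alternative is the estimate
\[
d(x^*,y^*)=d\bigl(\mathcal{C}(x^*,y^*),\mathcal{C}(y^*,x^*)\bigr)\le q\,d(x^*,y^*),
\]
which gives $x^*=y^*$ and hence $\mathcal{C}(x^*,x^*)=x^*$. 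I will state the conclusion in this strong form: there is a unique coupled fixed point, and it has the form $(x^*,x^*)$.
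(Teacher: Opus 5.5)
Your proposal is correct and is essentially the paper's argument. The paper also applies the Banach principle to $G(x,y)=(\mathcal{C}(x,y),\mathcal{C}(y,x))$ on $\SE(F)\times\SE(F)$; it uses the max metric $D((x,y),(u,v))=\max\{d(x,u),d(y,v)\}$ where you use the sum metric $\rho$, and the $q$-contraction estimate holds equally well for either. Your explicit check that the product is complete and your diagonal conclusion $x^*=y^*$ are valid additions the paper does not state. One small citation point: on $(Z,\rho)$ you are really invoking the classical Banach principle, not Corollary~\ref{cor:standard-fixed-points}(a), which is formulated for $\SE(F)$ itself.
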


\begin{proof}
On $\SE(F)\times\SE(F)$ use
\[
D((x,y),(u,v))=\max\{d(x,u),d(y,v)\}.
\]
The map $G(x,y)=(\mathcal{C}(x,y),\mathcal{C}(y,x))$ is a $q$-contraction.
Its unique fixed point $(x^*,y^*)$ satisfies
\[
x^*=\mathcal{C}(x^*,y^*),\qquad y^*=\mathcal{C}(y^*,x^*).
\]
\end{proof}

\begin{proposition}\label{prop:stability}
Assume that $(\SE(F),d)$ is complete. Let $T:\SE(F)\to\SE(F)$ be a contraction with constant $q<1$ and fixed point $x^*$.
Then
\[
d(x,x^*)\le\frac{d(x,Tx)}{1-q},\qquad x\in\SE(F).
\]
If $S:\SE(F)\to\SE(F)$ is another contraction with the same constant $q$ and
\[
\sup_x d(Tx,Sx)\le\varepsilon,
\]
then their fixed points satisfy
\[
d(x_T,x_S)\le\frac{\varepsilon}{1-q}.
\]
\end{proposition}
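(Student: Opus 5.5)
The plan is to derive both estimates directly from the triangle inequality and the contraction inequality. We take the existence and uniqueness of the fixed points $x^*=x_T$ and $x_S$ from Corollary~\ref{cor:standard-fixed-points}(a), which applies because $(\SE(F),d)$ is assumed complete. No property of soft elements is needed beyond completeness, so the argument is the standard metric-space one carried out on $(\SE(F),d)$.

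For the first inequality, fix $x\in\SE(F)$. Insert $Tx$ between $x$ and $x^*$ and use $Tx^*=x^*$:
\[
d(x,x^*)\le d(x,Tx)+d(Tx,Tx^*)\le d(x,Tx)+q\,d(x,x^*).
\]
The distance $d(x,x^*)$ is finite since $d$ is a metric; indeed $\dsup$ and $\dPi$ are both bounded by $1$. Rearranging therefore gives $(1-q)\,d(x,x^*)\le d(x,Tx)$. Dividing by $1-q>0$ yields the a posteriori bound.

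For the second inequality, apply the first inequality with $x=x_S$. Since $Sx_S=x_S$, this gives
\[
d(x_S,x_T)\le\frac{d(x_S,Tx_S)}{1-q}=\frac{d(Sx_S,Tx_S)}{1-q}\le\frac{\varepsilon}{1-q},
\]
using the hypothesis $\sup_x d(Tx,Sx)\le\varepsilon$ at the single point $x_S$. Only the contractivity of $T$ is used here. The contractivity of $S$ serves only to guarantee, again via Corollary~\ref{cor:standard-fixed-points}(a), that $x_S$ exists.

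There is no real obstacle. The only point to check is that the rearrangement in the first step is legitimate, which holds because $d(x,x^*)$ is a finite real number and $q<1$. The step I would state most carefully is the existence of $x_S$, since the statement speaks of \emph{their} fixed points: I would state explicitly that completeness together with the contractivity of $S$ ensures that $x_S$ is defined.
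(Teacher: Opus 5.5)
Your proof is correct, and the first estimate is proved exactly as in the paper. For the second estimate, the paper inserts $Sx_T$ and uses the contractivity of $S$, namely $d(x_T,x_S)\le d(Tx_T,Sx_T)+d(Sx_T,Sx_S)\le\varepsilon+q\,d(x_T,x_S)$. You instead apply the first estimate at $x=x_S$, which amounts to inserting $Tx_S$ and using the contractivity of $T$. The two arguments are mirror images. Yours has the small advantage of obtaining the perturbation bound as a direct corollary of the a posteriori bound. It also makes explicit that $S$ only needs a fixed point, not a $q$-contraction property.
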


\begin{proof}
The first estimate follows from
\[
d(x,x^*)\le d(x,Tx)+qd(x,x^*).
\]
For the second, use
\[
\begin{aligned}
d(x_T,x_S)
&\le d(Tx_T,Sx_T)+d(Sx_T,Sx_S)\\
&\le\varepsilon+q\,d(x_T,x_S).
\end{aligned}
\]
\end{proof}

\subsection{Global and coordinatewise contractions}

\begin{definition}
A map $T:\SE(F)\to\SE(F)$ is \emph{coordinate-separable} if there are maps $T_e:F(e)\to F(e)$ such that
\[
(Tx)(e)=T_e(x(e))
\]
for every $e$ and every $x\in\SE(F)$.
\end{definition}

\begin{proposition}\label{prop:global-to-coordinate}
Suppose $T$ is coordinate-separable.
If $T$ is a $q$-contraction in $\dsup$, then every $T_e$ is a $q$-contraction in $\bared{e}$.
If $E$ is at most countable and $T$ is a $q$-contraction in $\dPi$, the same conclusion holds.
Without coordinate separability, a global contraction need not give a coordinatewise contraction.
\end{proposition}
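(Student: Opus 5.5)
The plan is to test the global contraction inequality on pairs of selections that differ in exactly one coordinate. Coordinate separability forces the images of such a pair to differ in that coordinate only, so the global inequality reduces to an inequality in one fibre.

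Fix a base selection $a\in\SE(F)$; it exists by the standing assumption $\SE(F)\neq\varnothing$. Fix $e_0\in E$ and $u,v\in F(e_0)$. Let $x,y\in\SE(F)$ agree with $a$ off $e_0$, with $x(e_0)=u$ and $y(e_0)=v$. By the definitions,
\[
\dsup(x,y)=\bared{e_0}(u,v).
\]
Because $T$ is coordinate-separable, $(Tx)(e)=T_e(a(e))=(Ty)(e)$ for every $e\neq e_0$, while $(Tx)(e_0)=T_{e_0}(u)$ and $(Ty)(e_0)=T_{e_0}(v)$. Hence
\[
\dsup(Tx,Ty)=\bared{e_0}(T_{e_0}u,T_{e_0}v).
\]
The hypothesis $\dsup(Tx,Ty)\le q\,\dsup(x,y)$ then gives
\[
\bared{e_0}(T_{e_0}u,T_{e_0}v)\le q\,\bared{e_0}(u,v),
\]
which is the first claim.

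For $\dPi$ with $E$ at most countable, write $e_0=e_n$ and use the same pair. Every series term except the $n$-th vanishes both before and after applying $T$. So
\[
\dPi(x,y)=2^{-n}\bared{e_n}(u,v),\qquad \dPi(Tx,Ty)=2^{-n}\bared{e_n}(T_{e_n}u,T_{e_n}v).
\]
Cancelling the common weight $2^{-n}$ from the contraction inequality gives the same conclusion. Note that the conclusion is stated for the truncated metric $\bared{e}$, not $d_e$. That is exactly what this argument yields, so no comparison with $d_e$ is needed.

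For the last sentence I will give an explicit example. Take $E=\{1,2\}$, $F(1)=F(2)=[0,1]$ with the usual metric, and
\[
(Tx)(1)=\tfrac12 x(2),\qquad (Tx)(2)=\tfrac12 x(1).
\]
Since all distances are at most $1$, the truncation does nothing here. The map $T$ is a $\tfrac12$-contraction in $\dsup$, since each output coordinate difference is at most $\tfrac12\dsup(x,y)$. By Theorem~\ref{thm:topology-comparison}(c), or by a direct estimate, it is also a contraction in $\dPi$ with constant $\tfrac12$: in $\dPi(Tx,Ty)$ the coordinate difference $|x(2)-y(2)|$ now carries weight $\tfrac12\cdot\tfrac12=\tfrac14$ instead of $\tfrac14$, and $|x(1)-y(1)|$ carries weight $\tfrac14\cdot\tfrac12=\tfrac18$ instead of $\tfrac12$, so both terms shrink by at least $\tfrac12$. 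Yet $(Tx)(1)$ is not a function of $x(1)$ alone. Fixing $x(1)=0$ and letting $x(2)$ vary changes $(Tx)(1)$, so no coordinate maps $T_e$ exist and no coordinatewise contraction statement even makes sense. Every fixed-point conclusion is obtained globally, from Corollary~\ref{cor:standard-fixed-points}(a).

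The main obstacle is phrasing the failure claim so that it is genuinely a counterexample rather than a vacuous observation. If the intended reading is that the \emph{sections} $u\mapsto (Tx^{a,u})(e_0)$ can fail to be contractions, the $\dsup$ case cannot serve: there the sections are always $q$-Lipschitz, because $\dsup(x^{a,u},x^{a,v})=\bared{e_0}(u,v)$. I would then turn to $\dPi$, where the unequal weights $2^{-n}$ only give a bound with factor $q\,2^{m-n}$, which can exceed $1$. I would first present the simple non-separable example above and then, if needed, look in that direction for an expanding section.
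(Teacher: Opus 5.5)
Your proofs of the two implications follow the paper's argument: perturb a base selection $a$ in one coordinate, then cancel $2^{-n}$ for $\dPi$. Your swap map $(Tx)(1)=\tfrac12x(2)$, $(Tx)(2)=\tfrac12x(1)$ is a valid non-separable $\tfrac12$-contraction in $\dsup$. It does the same job as the paper's $T(x_1,x_2)=(qx_2,0)$ in the max metric, and since one metric suffices, the last sentence of the statement is proved.

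You should, however, delete the claim that the swap map is also a $\tfrac12$-contraction in $\dPi$. It is false, and your own bookkeeping shows why: the weight on $|x(2)-y(2)|$ is $\tfrac14$ both before and after applying $T$. For $x=(0,0)$, $y=(0,1)$ you get $Tx=(0,0)$, $Ty=(\tfrac12,0)$, and $\dPi(Tx,Ty)=\tfrac14=\dPi(x,y)$. So $T$ is not a contraction in $\dPi$ for any $q<1$.

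Theorem~\ref{thm:topology-comparison}(c) cannot rescue this, because Lipschitz equivalence does not preserve contraction constants. Here it only yields $\dPi(Tx,Ty)\le\dsup(Tx,Ty)\le\tfrac12\dsup(x,y)\le 2\,\dPi(x,y)$. If you want one example that works in both metrics, reweight to $(Tx)(1)=\tfrac14x(2)$, $(Tx)(2)=\tfrac12x(1)$. Then $\dPi(Tx,Ty)=\tfrac18|x(1)-y(1)|+\tfrac18|x(2)-y(2)|\le\tfrac12\dPi(x,y)$.

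Your closing speculation is also mistaken. For same-coordinate sections the weights do not become unequal in $\dPi$: even without separability,
$2^{-n}\bared{e_n}\bigl((Tx)(e_n),(Ty)(e_n)\bigr)\le\dPi(Tx,Ty)\le q\,2^{-n}\bared{e_n}(u,v)$.
So such sections are $q$-Lipschitz in both metrics, and no expanding section exists. The only meaningful reading of the last sentence is the one you and the paper actually use: the $e$-th output is not a function of $x(e)$ alone, so no fibre maps $T_e$ exist.
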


\begin{proof}
Fix $e\in E$ and $u,v\in F(e)$.
Choose a base selection $a\in\SE(F)$ and form selections $x,y$ that agree with $a$ outside $e$, with $x(e)=u$ and $y(e)=v$.
For $\dsup$,
\[
\begin{aligned}
\bared{e}(T_e(u),T_e(v))
&=\dsup(Tx,Ty)\\
&\le q\dsup(x,y)
 =q\bared{e}(u,v).
\end{aligned}
\]
For $\dPi$, if $e=e_n$, the common factor $2^{-n}$ cancels in the same argument.

For a counterexample, take $E=\{1,2\}$, $F(1)=F(2)=[0,1]$, and use the max metric.
For a fixed $q\in(0,1)$ define
\[
T(x_1,x_2)=(qx_2,0).
\]
Then $T$ is a global $q$-contraction.
However, the first output coordinate cannot satisfy an estimate in terms of $|x_1-y_1|$ alone, because it changes when $x_1=y_1$ but $x_2\ne y_2$.
\end{proof}

\begin{proposition}\label{prop:coordinate-to-global}
Suppose $T$ is coordinate-separable and
\[
\bared{e}(T_e(u),T_e(v))\le q_e\bared{e}(u,v),\qquad q_e<1.
\]
If $q=\sup_{e\in E}q_e<1$, then $T$ is a $q$-contraction in $\dsup$ and, when $E$ is at most countable, also in $\dPi$.
Without the uniform bound $\sup_{e\in E}q_e<1$, the map need not be a global contraction.
\end{proposition}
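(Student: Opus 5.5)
The plan has two parts: a direct coordinatewise estimate for the positive claims, followed by a counterexample showing the uniform bound cannot be dropped.

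For the positive part, fix $x,y\in\SE(F)$. Coordinate separability lets us write each output coordinate through the fibre map, and the hypothesis then gives, for every $e\in E$,
\[
\bared{e}\bigl((Tx)(e),(Ty)(e)\bigr)=\bared{e}\bigl(T_e(x(e)),T_e(y(e))\bigr)\le q_e\,\bared{e}(x(e),y(e))\le q\,\bared{e}(x(e),y(e)).
\]
\begin{itemize}
\item For $\dsup$, take the supremum over $e$ on both sides. The right-hand side is at most $q\,\dsup(x,y)$, so $\dsup(Tx,Ty)\le q\,\dsup(x,y)$.
\item For $\dPi$ with $E=\{e_1,e_2,\ldots\}$ (finite or countable), multiply the inequality at $e_n$ by $2^{-n}$ and sum over $n$. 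The right-hand side becomes $q\,\dPi(x,y)$, so $\dPi(Tx,Ty)\le q\,\dPi(x,y)$.
\end{itemize}
This mirrors the proof of Proposition~\ref{prop:metric-properties}. No step here is delicate, because the truncation to $\bared{e}$ is already built into the hypothesis.

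For the negative part, I would reuse Example~\ref{ex:nonuniform}. Take $E=\N$, $F(n)=[0,1]$, and $T_n(u)=q_nu$ with $q_n=1-\frac{1}{n+1}$. Since values lie in $[0,1]$, we have $\bared{n}=d_n$, and each $T_n$ is a $q_n$-contraction with $q_n<1$, while $\sup_n q_n=1$.

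\begin{itemize}
\item \emph{Failure in $\dsup$.} Let $x=0$ and $y=\mathbf 1$. Then $\dsup(x,y)=1$ and $\dsup(Tx,Ty)=\sup_n q_n=1$. So no constant $q<1$ can satisfy $\dsup(Tx,Ty)\le q\,\dsup(x,y)$.
\item \emph{Failure in $\dPi$.} Let $x=0$ and let $y$ be the indicator of the coordinate $e_n$. Then $\dPi(Tx,Ty)/\dPi(x,y)=q_n\to1$, so again no uniform constant $q<1$ works.
\end{itemize}

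The only point needing care is that the counterexample must use a metric in which the ratio genuinely approaches $1$. Testing single-coordinate perturbations handles this for $\dPi$, and testing the constant selection $\mathbf 1$ handles it for $\dsup$.
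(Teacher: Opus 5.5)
Your proposal is correct and follows the paper's proof. The positive part is the same coordinatewise estimate followed by taking the supremum for $\dsup$, or weighting by $2^{-n}$ and summing for $\dPi$, and the negative part uses the same map as Example~\ref{ex:nonuniform}. Your single-coordinate witnesses add something the paper leaves implicit: its example only exhibits failure of uniform convergence, which rules out a $\dsup$-contraction, while the iterates there do converge in $\dPi$, so it is your ratio $q_n\to1$ that shows no constant $q<1$ works in $\dPi$.
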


\begin{proof}
Taking the supremum gives
\[
\dsup(Tx,Ty)\le q\dsup(x,y).
\]
For countable $E$, multiply each coordinate estimate by $2^{-n}$ and sum to obtain
\[
\dPi(Tx,Ty)\le q\dPi(x,y).
\]
Example~\ref{ex:nonuniform} shows that the conclusion may fail without this bound.
\end{proof}

\section{Uncountable parameter sets and the product uniformity}\label{sec:uncountable}

\subsection{Product topology and its uniformity}

\begin{lemma}\label{lem:not-first-countable}
Assume $E$ is uncountable and every fibre $F(e)$ contains at least two points.
Then the product topology on $\SE(F)$ is not first countable and hence is not metrizable.
\end{lemma}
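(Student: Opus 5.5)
The plan is the standard argument: a countable family of basic neighbourhoods constrains only countably many coordinates, so it cannot form a neighbourhood base at a point. Since $\SE(F)\neq\varnothing$ is assumed, I would fix a selection $a\in\SE(F)$. Then I would suppose, for contradiction, that $a$ has a countable neighbourhood base $\{U_k\}_{k\in\N}$ in $\tprod$.

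First, I would shrink each $U_k$ to a basic open set. Each $U_k$ contains a basic product neighbourhood $B_k\ni a$. By definition of the subspace product topology, $B_k$ has the form
\[
B_k=\{x\in\SE(F): x(e)\in V_{k,e}\text{ for } e\in S_k\},
\]
where $S_k\subseteq E$ is finite and each $V_{k,e}$ is open in $F(e)$ and contains $a(e)$. The family $\{B_k\}$ is again a countable neighbourhood base at $a$. The set $S=\bigcup_k S_k$ is countable, so because $E$ is uncountable there is a parameter $e_0\in E\setminus S$.

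Next, I would use the hypothesis that $F(e_0)$ has at least two points to pick $b\in F(e_0)$ with $b\ne a(e_0)$. Since $F(e_0)$ is a metric space, hence Hausdorff, the set $W=F(e_0)\setminus\{b\}$ is open and contains $a(e_0)$. Hence $U=\{x\in\SE(F):x(e_0)\in W\}$ is a product neighbourhood of $a$. I claim that no $B_k$ is contained in $U$. To see this, define the selection $x$ by $x(e_0)=b$ and $x(e)=a(e)$ for $e\ne e_0$. This $x$ lies in $\SE(F)$, and it lies in every $B_k$ because it agrees with $a$ on $S_k$, which does not contain $e_0$. But $x\notin U$. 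This contradicts the assumption that $\{B_k\}$ is a base, so $\tprod$ is not first countable at $a$.

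Finally, every metrizable space is first countable, since the balls of radius $1/n$ form a countable base. So $\tprod$ is not metrizable. There is no real obstacle here. The only points needing care are reducing each $U_k$ to a basic neighbourhood with finite support, and using the nonemptiness of $\SE(F)$ so that the modified selection $x$ exists. The latter is immediate, since $x$ modifies $a$ at a single coordinate.
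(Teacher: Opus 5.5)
Your proof is correct and follows the paper's argument essentially step for step. Both arguments shrink the countable base to basic neighbourhoods, pick a coordinate $e_0$ outside the countable union of their finite supports, and use a selection differing from the base point only at $e_0$ to escape a neighbourhood that restricts that coordinate. Your explicit choice $W=F(e_0)\setminus\{b\}$ and the explicit one-coordinate modification make the paper's steps concrete.
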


\begin{proof}
Fix $x\in\SE(F)$ and suppose $\{U_n:n\in\N\}$ is a local base at $x$.
For each $n$, choose a basic product neighborhood $B_n$ with $x\in B_n\subseteq U_n$.
Let $S_n$ be the finite set of coordinates restricted by $B_n$.
The union $S=\bigcup_n S_n$ is countable, so choose $e_0\in E\setminus S$.
Choose $a\in F(e_0)$ different from $x(e_0)$ and a fibre neighborhood $V$ of $x(e_0)$ that does not contain $a$.
The product neighborhood
\[
W=\{y\in\SE(F):y(e_0)\in V\}
\]
contains $x$.
Some $U_n$ must be contained in $W$.
But $B_n$ does not restrict $e_0$, so it contains a selection whose $e_0$-coordinate is $a$; this point lies in $U_n$ but not in $W$, a contradiction.
\end{proof}

\begin{remark}
If only countably many fibres are non-singleton, this obstruction disappears and the product topology may be metrizable.
\end{remark}

Although the product topology need not be metrizable, it is induced by a natural uniformity.

\begin{definition}\label{def:product-uniformity}
For $e\in E$, put
\[
\rho_e(x,y)=\bared{e}(x(e),y(e)).
\]
The product uniformity $\Uprod$ is generated by the entourages
\[
\begin{aligned}
U(S,\varepsilon)=\{(x,y):\ &\rho_e(x,y)<\varepsilon\\
&\text{for every }e\in S\},
\end{aligned}
\]
where $S\subseteq E$ is finite and $\varepsilon>0$.
\end{definition}

\begin{proposition}\label{prop:uniformity-topology}
The uniformity $\Uprod$ induces the product topology $\tprod$.
If $E$ is at most countable, $\dPi$ metrizes this uniformity.
\end{proposition}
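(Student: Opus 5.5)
The proof has two parts: the topology induced by $\Uprod$, and the metrization of $\Uprod$ by $\dPi$ in the countable case.

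\emph{Part one.} Recall that a uniformity induces the topology whose neighbourhood base at $x$ is the family of sections $U[x]=\{y:(x,y)\in U\}$, with $U$ running over a base of entourages. The first step is to check that the sets $U(S,\varepsilon)$ form such a base. Each contains the diagonal and is symmetric. Their family is closed under finite intersections, since $U(S,\varepsilon)\cap U(S',\varepsilon')\supseteq U(S\cup S',\min\{\varepsilon,\varepsilon'\})$. Finally, $U(S,\varepsilon/2)\circ U(S,\varepsilon/2)\subseteq U(S,\varepsilon)$ by the coordinate triangle inequality for $\bared{e}$. The section at $x$ is
\[
U(S,\varepsilon)[x]=\{y\in\SE(F):\bared{e}(x(e),y(e))<\varepsilon\ \text{for } e\in S\}.
\]
For $\varepsilon\le1$, $\bared{e}$-balls coincide with $d_e$-balls, so this section is a basic product neighbourhood determined by finitely many fibre balls. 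Conversely, take a basic product neighbourhood of $x$, which restricts finitely many coordinates $e\in S$ to open sets $V_e\ni x(e)$. Choose $\varepsilon_e\le 1$ with the $\varepsilon_e$-ball about $x(e)$ contained in $V_e$, and set $\varepsilon=\min_{e\in S}\varepsilon_e$. Then $U(S,\varepsilon)[x]$ lies inside the given neighbourhood. So the two neighbourhood systems are mutually cofinal at every point, and $\Uprod$ induces $\tprod$.

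\emph{Part two.} For $E=\{e_1,e_2,\dots\}$ (finite or countable), the claim is that the metric uniformity of $\dPi$, whose base is the sets $D_\delta=\{(x,y):\dPi(x,y)<\delta\}$, equals $\Uprod$. This is a uniform version of the argument for Theorem~\ref{thm:topology-comparison}(a), with estimates that do not depend on the base point.

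\begin{itemize}[leftmargin=2em]
\item \emph{Every $U(S,\varepsilon)$ contains some $D_\delta$.} Let $N$ be the largest index of a coordinate in $S$ and take $\delta=2^{-N}\min\{\varepsilon,1\}$. Each term of the series is bounded by the whole sum, so $\dPi(x,y)<\delta$ gives $\bared{e_n}(x(e_n),y(e_n))<2^n\delta\le\varepsilon$ for all $n\le N$.
\item \emph{Every $D_\delta$ contains some $U(S,\varepsilon)$.} Since $\bared{e}\le 1$, the tail satisfies $\sum_{n>N}2^{-n}\bared{e_n}\le 2^{-N}$. Choose $N$ with $2^{-N}<\delta/2$, take $S=\{e_1,\dots,e_N\}$ and $\varepsilon=\delta/2$. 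Then $(x,y)\in U(S,\varepsilon)$ gives $\dPi(x,y)<\varepsilon\sum_{n\le N}2^{-n}+\delta/2<\delta$.
\end{itemize}

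The only point needing care is that these constants must be uniform in $(x,y)$, unlike the pointwise neighbourhood comparison in Theorem~\ref{thm:topology-comparison}. The choices of $\delta$ and $(S,\varepsilon)$ above depend only on the entourage being matched, never on the pair $(x,y)$, so the argument is genuinely uniform and mutual cofinality of the two bases finishes the proof.
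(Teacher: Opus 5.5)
Your proof is correct and takes the same approach as the paper. You identify the sections $U(S,\varepsilon)[x]$ with basic product neighbourhoods, and for at most countable $E$ you show that the $\dPi$-entourages and the sets $U(S,\varepsilon)$ refine one another, with constants independent of $(x,y)$; the paper states each of these steps in a single line, and your version supplies the explicit choices of $\delta$, $N$ and $\varepsilon$.
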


\begin{proof}
The set $U(S,\varepsilon)[x]$ restricts the coordinates in $S$ by fibre balls.
These sets form the usual product-neighborhood base.
When $E$ is at most countable, the entourages of $\dPi$ and $\Uprod$ refine one another.
\end{proof}

\subsection{The parameterwise contraction theorem}

\begin{remark}\label{rem:star-separable}
Condition $(\star_e)$ below already forces coordinate separability. If $x(e)=y(e)$, then its right-hand side is zero, so $(Tx)(e)=(Ty)(e)$. Thus the $e$-th output depends only on the $e$-th input.
\end{remark}

\begin{theorem}\label{thm:parameterwise}
Assume $\SE(F)\neq\varnothing$ and every fibre is complete.
Let $T:\SE(F)\to\SE(F)$ satisfy, for each $e\in E$, an estimate
\[
\bared{e}\bigl((Tx)(e),(Ty)(e)\bigr)
\le q_e\bared{e}\bigl(x(e),y(e)\bigr)
\tag{$\star_e$}
\]
for all $x,y\in\SE(F)$,
where $0\le q_e<1$.
Then:
\begin{enumerate}[label=\textup{(\alph*)},leftmargin=2em]
\item $T$ has a unique fixed point $x^*\in\SE(F)$;
\item for every $x_0\in\SE(F)$, the Picard sequence $x_{n+1}=Tx_n$ converges to $x^*$ in the product topology;
\item if $\sup_{e\in E}q_e<1$, the convergence is also in $\dsup$.
\end{enumerate}
\end{theorem}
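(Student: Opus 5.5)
The plan is to reduce everything to the fibres, using Remark~\ref{rem:star-separable}. By that remark, condition $(\star_e)$ makes $T$ coordinate-separable. So there are maps $T_e:F(e)\to F(e)$ with $(Tx)(e)=T_e(x(e))$.

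To define $T_e(u)$ for $u\in F(e)$, I fix a base selection $a\in\SE(F)$ and let $x$ agree with $a$ off $e$, with $x(e)=u$. I then set $T_e(u)=(Tx)(e)$. This is well defined, since by the remark the output does not depend on the other coordinates. Substituting such selections into $(\star_e)$ shows that each $T_e$ is a $q_e$-contraction of $(F(e),\bared{e})$. That space is complete by Lemma~\ref{lem:bounded-complete}.

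For part (a), I apply the Banach contraction principle in each fibre to get a unique fixed point $u_e^*\in F(e)$. I then define $x^*(e)=u_e^*$; this is a selection, so $x^*\in\SE(F)$, and no choice is needed beyond the fixed $a$. Since $(Tx^*)(e)=T_e(u_e^*)=u_e^*$ for every $e$, $x^*$ is a fixed point. For uniqueness, if $Ty=y$ then each $y(e)$ is a fixed point of $T_e$, so $y(e)=u_e^*$. Alternatively, $(\star_e)$ directly gives $\bared{e}(y(e),x^*(e))\le q_e\bared{e}(y(e),x^*(e))$, which forces the distance to be zero.

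For part (b), iterating $(\star_e)$ with $y=x^*$ gives
\[
\bared{e}\bigl(x_n(e),x^*(e)\bigr)\le q_e^n\,\bared{e}\bigl(x_0(e),x^*(e)\bigr)\le q_e^n.
\]
This tends to $0$ for each fixed $e$. Convergence in every coordinate is convergence in $\tprod$, because a basic product neighbourhood, equivalently $U(S,\varepsilon)[x^*]$ from Proposition~\ref{prop:uniformity-topology}, restricts only finitely many coordinates. I then take $n$ large enough for the finitely many $e\in S$.

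For part (c), with $q=\sup_e q_e<1$, the same estimate gives $\dsup(x_n,x^*)\le q^n\to0$. Equivalently, $T$ is a $q$-contraction in $\dsup$ by Proposition~\ref{prop:coordinate-to-global}, and Corollary~\ref{cor:standard-fixed-points}(a) applies. The uniqueness already shown in (a) identifies that limit with $x^*$.

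There is no real obstacle. The only delicate point is that $(\star_e)$ is stated on selections rather than on fibres, which is why I pass through Remark~\ref{rem:star-separable} and a fixed base selection. A second point to keep explicit is that (b) needs no uniform rate: only finitely many coordinates are controlled at once, which is exactly the nonuniform behaviour in Example~\ref{ex:nonuniform}.
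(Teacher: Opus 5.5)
Your proof is correct, but it is organized differently from the paper's.

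\textbf{The paper's route.} The paper never introduces fibre maps. It runs the Picard sequence of $T$ itself on $\SE(F)$ and uses $(\star_e)$ to show that each coordinate sequence $(x_n(e))$ is Cauchy. It defines $x^*$ as the coordinatewise limit, so (b) comes directly from the construction. Only then does it check $Tx^*=x^*$, by applying $(\star_e)$ with $y=x^*$ and passing to the limit. For (c) it takes suprema to get a $\dsup$-contraction and cites Corollary~\ref{cor:standard-fixed-points}(a). That step tacitly relies on completeness of $(\SE(F),\dsup)$ from Theorem~\ref{thm:completeness-equivalence}(b).

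\textbf{Your route.} You use Remark~\ref{rem:star-separable} and a base selection to build genuine $q_e$-contractions $T_e$ of $(F(e),\bared{e})$. You then obtain $x^*$ fibrewise from the Banach principle, so existence is settled independently of any particular orbit. Both (b) and (c) then follow from the single a priori estimate $\bared{e}(x_n(e),x^*(e))\le q_e^n$.

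\textbf{What each buys.} Your version gives an explicit rate. In (c) it yields $\dsup(x_n,x^*)\le q^n$ directly, without invoking completeness of $\dsup$ or the transfer corollary. It also makes clear that no choice is involved, since each $x^*(e)$ is uniquely determined. The paper's version is more economical: the fixed point is produced by the very iteration whose convergence is claimed, and no auxiliary fibre maps have to be constructed or checked for well-definedness.
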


\begin{proof}
Fix $x_0\in\SE(F)$ and put $x_{n+1}=Tx_n$.
For a fixed $e$, condition $(\star_e)$ gives
\[
\bared{e}(x_{n+1}(e),x_n(e))
\le q_e^n\bared{e}(x_1(e),x_0(e)).
\]
If $m>n$, the triangle inequality yields
\[
\bared{e}(x_m(e),x_n(e))
\le \bared{e}(x_1(e),x_0(e))
\sum_{k=n}^{m-1}q_e^k.
\]
The right-hand side tends to zero, so $(x_n(e))$ is Cauchy in the complete fibre $F(e)$.
Let its limit be $x^*(e)$.
The function $x^*:e\mapsto x^*(e)$ belongs to $\SE(F)$, and $x_n\to x^*$ coordinatewise, hence in $\tprod$.

To prove fixedness, use $(\star_e)$ with $y=x^*$:
\[
\begin{aligned}
\bared{e}\bigl((Tx_n)(e),(Tx^*)(e)\bigr)
&\le q_e\bared{e}\bigl(x_n(e),x^*(e)\bigr)\\
&\longrightarrow 0.
\end{aligned}
\]
Since $Tx_n=x_{n+1}$ and $x_{n+1}(e)\to x^*(e)$, uniqueness of the fibre limit gives $(Tx^*)(e)=x^*(e)$.
This holds for every $e$, so $Tx^*=x^*$.

If $u$ and $v$ are fixed points, then
\[
\bared{e}(u(e),v(e))
\le q_e\bared{e}(u(e),v(e))
\]
for every $e$.
Because $q_e<1$, all coordinate distances vanish and $u=v$.

Finally, if $q=\sup_{e\in E}q_e<1$, taking suprema in $(\star_e)$ gives
\[
\dsup(Tx,Ty)\le q\dsup(x,y).
\]
Corollary~\ref{cor:standard-fixed-points}(a) then gives convergence in $\dsup$.
\end{proof}

\begin{corollary}\label{cor:uncountable-uniform}
For arbitrary, possibly uncountable $E$, a uniform coordinate contraction with a common constant $q<1$ has a unique fixed point in $(\SE(F),\dsup)$ and its Picard iterates converge uniformly.
\end{corollary}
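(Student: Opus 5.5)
The corollary is a direct specialization, so I will derive it from Theorem~\ref{thm:parameterwise} together with the transfer results. Fix $q<1$ and suppose $T:\SE(F)\to\SE(F)$ satisfies $(\star_e)$ with $q_e=q$ for every $e\in E$. I assume, as in Theorem~\ref{thm:parameterwise}, that $\SE(F)\neq\varnothing$ and that every fibre is complete. No countability of $E$ is used, so the argument must go only through $\dsup$ and never through $\dPi$.

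The first step is to turn the parameterwise bound into a global one. Taking the supremum over $e$ in $(\star_e)$ gives
\[
\dsup(Tx,Ty)=\sup_{e\in E}\bared{e}\bigl((Tx)(e),(Ty)(e)\bigr)\le q\sup_{e\in E}\bared{e}\bigl(x(e),y(e)\bigr)=q\,\dsup(x,y).
\]
Alternatively, Remark~\ref{rem:star-separable} shows that $T$ is coordinate-separable, and then Proposition~\ref{prop:coordinate-to-global} gives the same conclusion. Either way, $T$ is a Banach $q$-contraction on $(\SE(F),\dsup)$.

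The second step is completeness. By Theorem~\ref{thm:completeness-equivalence}(b), $(\SE(F),\dsup)$ is complete for arbitrary $E$. Corollary~\ref{cor:standard-fixed-points}(a) then yields a unique fixed point $x^*$, and every Picard sequence $x_n$ converges to $x^*$ in $\dsup$. By Proposition~\ref{prop:convergence}, $\dsup$-convergence is exactly uniform convergence over $E$ with respect to the bounded fibre metrics $\bared{e}$. For a quantitative statement, the standard a priori estimate $\dsup(x_n,x^*)\le q^n\dsup(x_1,x_0)/(1-q)$ holds, and the right-hand side is at most $q^n/(1-q)$.

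The only point that needs care is that uniqueness is asserted in $\SE(F)$, which is a single set regardless of which metric is used. It therefore agrees with Theorem~\ref{thm:parameterwise}(a). Also, "uniformly" refers to the truncated metrics $\bared{e}$. Because $\bared{e}=d_e$ once distances fall below $1$, and the bound $q^n/(1-q)$ is eventually below $1$, the uniform convergence also holds for the original fibre metrics $d_e$. Nothing here is a genuine obstacle: the uncountable case is covered because neither step depends on $\dPi$ or on metrizability of $\tprod$.
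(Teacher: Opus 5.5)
Your proposal is correct and follows the paper's proof. The paper cites Theorem~\ref{thm:parameterwise}(c), whose proof is exactly your step of taking suprema in $(\star_e)$ and applying Corollary~\ref{cor:standard-fixed-points}(a), together with Theorem~\ref{thm:completeness-equivalence}(b). Your added a priori estimate and your passage from uniform convergence in the metrics $\bared{e}$ to uniform convergence in the metrics $d_e$ are valid extras.
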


\begin{proof}
Apply Theorem~\ref{thm:parameterwise}(c) and Theorem~\ref{thm:completeness-equivalence}(b).
\end{proof}

\begin{table}[H]
\centering
\caption{Topology and fixed-point behaviour across parameter cardinalities.}
\label{tab:regimes}
\scriptsize
\begin{tabular}{@{}p{1.8cm}p{3.7cm}p{2.5cm}p{5.3cm}@{}}
\toprule
$E$ & Natural regimes & Metrizability & Fixed-point interpretation\\
\midrule
Finite & product and uniform coincide & yes & ordinary metric results apply without a topological distinction\\
Countable & product via $\dPi$; uniform via $\dsup$ & both yes & coordinatewise and uniform convergence may differ\\
Uncountable & product uniformity $\Uprod$; uniform metric $\dsup$ & product usually no; uniform yes & parameterwise contractions give product convergence; a uniform bound gives $\dsup$ convergence\\
\bottomrule
\end{tabular}
\normalsize
\end{table}

\section{Discussion and conclusion}

The selection space $\SE(F)$ carries two natural structures. The product topology describes coordinatewise convergence, while $\dsup$ describes uniform convergence over the parameter set. They agree when $E$ is finite, but Example~\ref{ex:nonuniform} shows that they can differ even when $E$ is countable.

Completeness of the selection space is equivalent to completeness of all fibres. The comparison results also show that global and coordinatewise contractivity are not interchangeable: coordinate separability is needed to pass from a global contraction to fibre maps, and a uniform bound below one is needed to obtain a global contraction from coordinate estimates.

For uncountable $E$, the product topology may not be metrizable, but its product uniformity still gives the correct finite-coordinate structure. Theorem~\ref{thm:parameterwise} gives a unique fixed point and product convergence under parameterwise contractions. When the contraction constants have a common bound below one, the convergence is uniform in $\dsup$. Thus fixed-point results on soft elements should state clearly whether the product or uniform regime is intended.

\end{document}